\documentclass{article}
\usepackage[final]{graphicx}
\usepackage{psfrag}
\usepackage{amsmath,amsfonts,amssymb,amsxtra,subeqnarray,bm}

\newlength{\extramargin}
\setlength{\extramargin}{0cm}
\setlength{\oddsidemargin}{\extramargin}
\setlength{\evensidemargin}{\extramargin}
\setlength{\textwidth}{16cm} \setlength{\textheight}{23.0cm}
\setlength{\topmargin}{-1.5cm}

\newcommand{\K}{\ensuremath{\mathcal{K}}}
\newcommand{\Real}{\ensuremath{{\mathbb{R}}}}

\newcommand{\Complex}{\ensuremath{{\mathbb{C}}}}

\newcommand{\V}{\ensuremath{\mathcal V}}

\newcommand{\M}{\ensuremath{\mathcal M}}

\newcommand{\E}{\ensuremath{\mathcal E}}

\newcommand{\G}{\ensuremath{\mathcal G}}

\newcommand{\B}{\ensuremath{\mathcal B}}

\newcommand{\one}{\ensuremath{{\mathbf{1}}}}

\newtheorem{theorem}{Theorem}

\newtheorem{corollary}{Corollary}

\newtheorem{lemma}{Lemma}

\newtheorem{definition}{Definition}

\newenvironment{proof}{\noindent {\bf Proof.}}{\hfill \hspace*{1pt}\hfill$\blacksquare$}

\begin{document}
\title{Harmonic synchronization under all three types of coupling: position, velocity, and acceleration}
\author{S. Emre Tuna\footnote{The author is with Department of
Electrical and Electronics Engineering, Middle East Technical
University, 06800 Ankara, Turkey. Email: {\tt etuna@metu.edu.tr}}}
\maketitle

\begin{abstract}
Synchronization of identical harmonic oscillators interconnected via
position, velocity, and acceleration couplings is studied. How to
construct a complex Laplacian matrix representing the overall
coupling is presented. It is shown that the oscillators
asymptotically synchronize if and only if this matrix has a single
eigenvalue on the imaginary axis. This result generalizes some of
the known spectral tests for synchronization. Some simpler Laplacian
constructions are also proved to work provided that certain
structural conditions are satisfied by the coupling graphs.
\end{abstract}

\section{Introduction}

If a group of identical harmonic oscillators $m_{0}{\ddot
x}_{i}+k_{0}x_{i}=0$ (where $m_{0},\,k_{0}>0$ and
$x_{1},\,x_{2},\,\ldots,\,x_{q}\in\Real$) are coupled through their
relative {\em velocities} to form a network
\begin{eqnarray}\label{eqn:R}
m_{0}{\ddot x}_{i}+k_{0}x_{i}+\sum_{j=1}^{q}b_{ij}({\dot
x}_{i}-{\dot x}_{j})=0
\end{eqnarray}
(where $b_{ji}=b_{ij}\geq 0$ and $b_{ii}=0$) they sometimes display
a remarkable behavior: synchronization, i.e.,
$|x_{i}(t)-x_{j}(t)|\to 0$ for all $(i,\,j)$ as $t\to\infty$. When
they shall synchronize (or fail to do so) is now well known. All one
has to do is check whether the graph $\B$ that the coupling
$(b_{ij})_{i,j=1}^{q}$ gives rise to\footnote{The graph $\B$ has $q$
nodes and there is an edge between $i$th and $j$th nodes if
$b_{ij}>0$.} is connected\footnote{See, e.g., \cite{diestel97} for
the definition of {\em connected graph}.} (or not). There is also an
equivalent, yet more technical, test to determine synchronization.
It employs the graph Laplacian
\begin{eqnarray*}
B =
\left[\begin{array}{cccc}\sum_{j}b_{1j}&-b_{12}&\cdots&-b_{1q}\\
-b_{21}&\sum_{j}b_{2j}&\cdots&-b_{2q}\\
\vdots&\vdots&\ddots&\vdots\\
-b_{q1}&-b_{q2}&\cdots&\sum_{j}b_{qj}\end{array}\right]=:{\rm
lap}\,(b_{ij})_{i,j=1}^{q}
\end{eqnarray*}
and makes a special case of \cite[Thm.~3.1]{ren08}: \vspace{0.12in}

\noindent{\bf Test~1.} The oscillators~\eqref{eqn:R} synchronize if
and only if $\lambda_{2}(B)>0$.\footnote{$\lambda_{i}(A)$ denotes
the $i$th eigenvalue of $A\in\Complex^{q\times q}$ with respect to
the ordering ${\rm Re}\,\lambda_{1}(A)\leq{\rm
Re}\,\lambda_{2}(A)\leq\cdots\leq{\rm Re}\,\lambda_{q}(A)$.}
\vspace{0.12in}

There are instances in the physical world where the {\em position}
coupling also plays a role in shaping the overall interconnection
among the oscillators \cite{tuna17}. This has motivated the
extension of the model~\eqref{eqn:R} to
\begin{eqnarray}\label{eqn:RL}
m_{0}{\ddot x}_{i}+k_{0}x_{i}+\sum_{j=1}^{q}b_{ij}({\dot
x}_{i}-{\dot x}_{j})+\sum_{j=1}^{q}k_{ij}(x_{i}-x_{j})=0
\end{eqnarray}
where $k_{ji}=k_{ij}\geq 0$ and $k_{ii}=0$. Let $\K$ be the graph
associated to the position coupling and $K={\rm
lap}\,(k_{ij})_{i,j=1}^{q}$ denote its Laplacian. Note that now we
have a pair of graphs $(\B,\,\K)$, as opposed to a single one,
describing the overall coupling. Unlike its simpler
version~\eqref{eqn:R} this more interesting setup~\eqref{eqn:RL}
does not admit a nontechnical condition, where synchronization can
be studied solely via graph connectivity. In particular, for the
synchronization of the oscillators~\eqref{eqn:RL}, neither it is
necessary that both $\B$ and $\K$ are separately connected nor it is
sufficient that their union $\B\cup\K$ is. Even though the
connectivity condition does not yield a straightforward extension,
it turns out that Test~1 does. In a recent work
\cite[Cor.~6]{tuna19} it has been shown that

\vspace{0.12in}

\noindent{\bf Test~2.} The oscillators~\eqref{eqn:RL} synchronize if
and only if ${\rm Re}\,\lambda_{2}(B+jK)>0$. \vspace{0.12in}

If one continues to walk in the direction of generalization that
took us from the solely velocity-coupled network~\eqref{eqn:R} to
both position- and velocity-coupled one~\eqref{eqn:RL}, the obvious
next stop is the setup where the {\em acceleration} coupling is also
present. Namely,
\begin{eqnarray}\label{eqn:inertial}
m_{0}{\ddot x}_{i}+k_{0}x_{i}+\sum_{j=1}^{q}m_{ij}({\ddot
x}_{i}-{\ddot x}_{j})+\sum_{j=1}^{q}b_{ij}({\dot x}_{i}-{\dot
x}_{j})+\sum_{j=1}^{q}k_{ij}(x_{i}-x_{j})=0
\end{eqnarray}
where $m_{ji}=m_{ij}\geq 0$ and $m_{ii}=0$. In accordance with our
previous notation we introduce the Laplacian $M={\rm
lap}\,(m_{ij})_{i,j=1}^{q}$ whose graph is denoted by $\M$. The
motivation for studying this general coupling scheme is not purely
theoretical; certain electrical oscillator networks under RLC-type
coupling indeed obey the dynamics~\eqref{eqn:inertial}.
\begin{figure}[h]
\begin{center}
\includegraphics[scale=0.45]{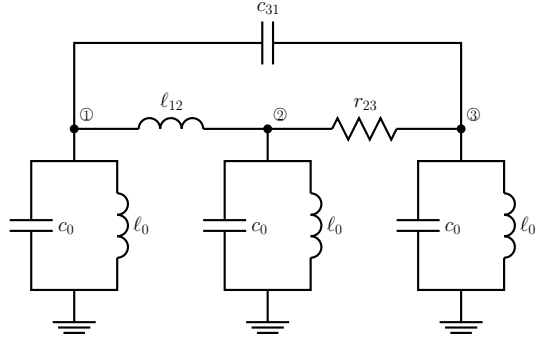}
\caption{A network of LC-tanks under RLC
coupling.}\label{fig:tanks3}
\end{center}
\end{figure}
Consider, for instance, the linear time-invariant (LTI) network of
$q=3$ coupled LC-tanks shown in Fig.~\ref{fig:tanks3}; where $c_{0}$
and $\ell_{0}$ are, respectively, the capacitance and the inductance
of the individual oscillators, $\ell_{12}$ is the inductance of the
inductor connecting the nodes \textcircled{\raisebox{-1pt}{1}} and
\textcircled{\raisebox{-1pt}{2}}, $r_{23}$ is the resistance of the
resistor connecting the nodes \textcircled{\raisebox{-1pt}{2}} and
\textcircled{\raisebox{-1pt}{3}}, and $c_{31}$ is the capacitance of
the capacitor connecting the nodes \textcircled{\raisebox{-1pt}{3}}
and \textcircled{\raisebox{-1pt}{1}}. Letting $x_{i}$ be the $i$th
node voltage, the dynamics of this simple example circuitry read
\begin{eqnarray*}
c_{0}{\ddot x}_{1}+\ell_{0}^{-1}x_{1}+c_{31}({\ddot x}_{1}-{\ddot
x}_{3})+\ell_{12}^{-1}(x_{1}-x_{2})&=&0\\
c_{0}{\ddot x}_{2}+\ell_{0}^{-1}x_{2}+r_{23}^{-1}({\dot x}_{2}-{\dot
x}_{3})+\ell_{12}^{-1}(x_{2}-x_{1})&=&0\\
c_{0}{\ddot x}_{3}+\ell_{0}^{-1}x_{3}+c_{31}({\ddot x}_{3}-{\ddot
x}_{1})+r_{23}^{-1}({\dot x}_{3}-{\dot x}_{2})&=&0\,.
\end{eqnarray*}

The clear-cut goal we intend to achieve in this note is to find the
missing ``Test 3'' which is supposed to tell us when the
oscillators~\eqref{eqn:inertial} synchronize. More formally, we will
investigate the conditions under which an LTI network of identical
harmonic oscillators interconnected by  inertial (acceleration),
dissipative (velocity), and restorative (position) couplings
asymptotically reach synchronization. Even though the nature of the
coupling in our setup~\eqref{eqn:inertial} is quite primitive (in
the sense that it is fixed and symmetric) it still is capable of
accounting for a rich variety of cases, for the overall
interconnection exercises itself through three different mediums
(acceleration, velocity, position) over three independent graphs
$(\M,\B,\,\K)$. We believe that the problem we study here is novel.
To the best of our knowledge, the collective behavior of harmonic
oscillators connected via these three separate coupling graphs has
not been studied before, despite the fact that there are real-world
networks (e.g., Fig.~1) that would benefit from such an analysis. In
particular, the possible effects of relative acceleration coupling
on the evolution of simple oscillator networks (being a fertile
subject of investigation notwithstanding) are yet unbeknownst to the
lively literature on harmonic synchronization, which we review next.

The literature on synchronization of coupled harmonic oscillators
has reached a certain maturity in the last decade. Most of the
initial results concerned pure velocity coupling; works on position
coupling appearing only later. One of the first comprehensive
analyses of harmonic oscillators within the synchronization
framework can be found in \cite{ren08}, where Ren considers
time-varying oscillator dynamics under time-varying and asymmetrical
velocity coupling. This work later enjoyed certain variations and
generalizations. For instance, a type of coupling that becomes
inactive when the distance between oscillators exceeds a threshold
is studied in \cite{su09}. Nonlinearly-coupled harmonic oscillators
are analyzed in \cite{cai10}, where an averaging technique is
employed to establish synchronization. Among many other articles
studying velocity-coupled harmonic oscillators are \cite{zhou12},
where the information exchange between units takes place in an
impulsive fashion; \cite{zhang12,sun14}, where sampled-data
approaches are proposed to study synchronization; and \cite{song19},
where both delayed measurements and negative coupling weights are
allowed. The early investigations on the effect and utility of
position coupling seem to go as far back as the work \cite{zhang13},
where position coupling is considered together with velocity
coupling, but not independently, in the sense that they share the
same single Laplacian matrix. The works that succeeded
\cite{zhang13} can be classified into two groups. One group removed
velocity coupling from the picture altogether and allowed relative
position measurements only, while the other group allowed in their
setup both position and velocity couplings, where each has its own
separate Laplacian matrix. To the first group belong, for instance,
\cite{song16,zhang18}, where synchronization is established via
sampled-data strategies. A generalization to heterogeneous harmonic
oscillator networks is later presented in \cite{song19b}. Also
related to the first group is \cite{liu19}, where bipartite
consensus problem is considered under sampled position data. The
second group contains the work \cite{tuna17}, where an
observability-like condition for synchronization is presented in
terms of the pair of Laplacians describing the overall
interconnection; and \cite{wang19}, where practical stochastic
synchronization is studied under position and velocity couplings.

The remainder of the paper is organized as follows. In
Section~\ref{sec:second} we present a complex Laplacian matrix
construction out of the network parameters
$(M,\,B,\,K,\,m_{0},\,k_{0})$ and an associated eigenvalue test
(that generalizes Tests~1 and 2) to determine whether the array of
oscillators~\eqref{eqn:inertial} synchronize. We also provide a
numerical example to emphasize the fact that synchronous behavior
(or its absence) does depend on the individual oscillator parameters
$(m_{0},\,k_{0})$; a peculiarity that the simpler
networks~\eqref{eqn:R} and \eqref{eqn:RL} do not suffer from. Then,
in Section~\ref{sec:simpler}, we bring forth some structural
conditions on the coupling graphs $\M,\,\B,\,\K$ under which the
eigenvalue test presented in Section~\ref{sec:second} takes much
simpler forms.

\section{The second eigenvalue}\label{sec:second}

Consider the network of $q$ coupled harmonic
oscillators~\eqref{eqn:inertial}. When these units will eventually
oscillate in unison is what we aim to find out here. For our
purpose, we focus on the implications of the spectral properties of
a $q$-by-$q$ complex Laplacian matrix (yet to be constructed) on
synchronization.

\begin{definition}
The oscillators~\eqref{eqn:inertial} are said to {\em synchronize}
if the solutions satisfy $|x_{i}(t)-x_{j}(t)|\to 0$ as $t\to\infty$
for all $(i,\,j)$ and all initial conditions.
\end{definition}

The identity matrix is denoted by $I\in\Real^{q\times q}$ and the
vector of all ones by $\one_{q}\in\Real^{q}$. By construction the
(previously defined) Laplacian matrices $M,\,B,\,K\in\Real^{q\times
q}$ are all symmetric positive semidefinite and the null space of
each contains the vector $\one_{q}$. By letting $x=[x_{1}\ x_{2}\
\cdots\ x_{q}]^{T}\in\Real^{q}$ we can rewrite the
dynamics~\eqref{eqn:inertial} as
\begin{eqnarray}\label{eqn:x}
(M+m_{0}I){\ddot x}+B{\dot x}+(K+k_{0}I)x=0\,.
\end{eqnarray}
We observe that every solution $x(t)$ of \eqref{eqn:x} is bounded.
This fact can be established using the function
\begin{eqnarray}\label{eqn:V}
V=\frac{1}{2}x^{T}(K+k_{0}I)x+\frac{1}{2}{\dot x}^{T}(M+m_{0}I){\dot
x}
\end{eqnarray}
which is nonnegative because both $(K+k_{0}I)=:K_{\rm a}$ and
$(M+m_{0}I)=:M_{\rm a}$ are positive definite matrices. Combining
\eqref{eqn:x} and \eqref{eqn:V} yields ${\dot V}=-{\dot x}^{T}B{\dot
x}$. Since $B$ is positive semidefinite we have ${\dot V}\leq 0$.
Therefore $V(t)\leq V(0)$ for all $t\geq 0$, which at once implies
the boundedness of $x(t)$. Now, being produced by an LTI system,
$x(t)$ can be written as a finite sum
\begin{eqnarray}\label{eqn:sum}
x(t)=\sum_{k}{\rm Re}\,(e^{\lambda_{k}t}p_{k}(t))
\end{eqnarray}
where $\lambda_{k}\in\Complex$ are distinct and $p_{k}(t)$ are
polynomials with vector coefficients. In the light of boundedness we
can then assert that ${\rm Re}\,\lambda_{k}\leq 0$ for all $k$ and
if ${\rm Re}\,\lambda_{k}=0$ for some $k$ then the corresponding
polynomial $p_{k}(t)$ must necessarily be of degree zero, i.e., a
constant vector. Suppose now the oscillators~\eqref{eqn:inertial}
fail to synchronize. This implies that there exists a
solution~\eqref{eqn:sum} where the sum contains an index $k$ for
which $\lambda_{k}=j\omega$ and $p_{k}(t)\equiv \xi$ with
$\omega\in\Real_{>0}$ and $\xi\in\Complex^{q}\setminus{\rm
span}\,\{\one_{q}\}$. Because the function $t\mapsto e^{j\omega
t}\xi$ has to satisfy \eqref{eqn:x}, we have
\begin{eqnarray}\label{eqn:cherry}
(K_{\rm a}-\omega^{2}M_{\rm a})\xi+j\omega B\xi=0\,.
\end{eqnarray}
Multiplying the above equation from left by $\xi^{*}$ yields
\begin{eqnarray}\label{eqn:plum}
\xi^{*}K_{\rm a}\xi-\omega^{2}\xi^{*}M_{\rm
a}\xi+j\omega\xi^{*}B\xi=0\,.
\end{eqnarray}
Note that the terms $\xi^{*}K_{\rm a}\xi,\,\xi^{*}M_{\rm
a}\xi,\,\xi^{*}B\xi$ are all real because $K_{\rm a},\,M_{\rm
a},\,B\geq 0$. Therefore \eqref{eqn:plum} implies $\xi^{*}B\xi=0$.
Since $B$ is symmetric positive semidefinite this means $B\xi=0$,
whence $(K_{\rm a}-\omega^{2}M_{\rm a})\xi=0$ by \eqref{eqn:cherry}.
To summarize, if the oscillators~\eqref{eqn:inertial} do not
synchronize then there exist a real number $\omega>0$ and a vector
$\xi\notin{\rm span}\,\{\one_{q}\}$ such that
\begin{eqnarray}\label{eqn:pine}
\left[\begin{array}{c}(K+k_{0}I)-\omega^{2}(M+m_{0}I)\\
B\end{array}\right]\xi=0\,.
\end{eqnarray}
It is not difficult to see that the steps we have taken are
reversible. That is, if we can find a real number $\omega>0$ and a
vector $\xi\notin{\rm span}\,\{\one_{q}\}$ satisfying
\eqref{eqn:pine} then we can construct the function $t\mapsto {\rm
Re}\,(e^{j\omega t}\xi)$ which solves \eqref{eqn:x} thanks to
\eqref{eqn:pine}. And this cannot a synchronous solution because
$\xi\notin{\rm span}\,\{\one_{q}\}$. We have therefore established:

\begin{lemma}\label{lem:one}
The following are equivalent.
\begin{enumerate}
\item The oscillators~\eqref{eqn:inertial} do not synchronize.
\item There exist $\omega\in\Real_{>0}$ and
$\xi\in\Complex^{q}\setminus{\rm span}\,\{\one_{q}\}$ satisfying
\eqref{eqn:pine}.
\end{enumerate}
\end{lemma}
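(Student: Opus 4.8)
The plan is to run a single reversible chain connecting ``the oscillators do not synchronize'' to the existence of $(\omega,\xi)$ as in \eqref{eqn:pine}: argue forward from item~1 to item~2, then observe that every step can be retraced to obtain the converse.

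\emph{Forward direction.} Rewrite \eqref{eqn:inertial} in the vector form \eqref{eqn:x}, with $K_{\rm a}:=K+k_{0}I$ and $M_{\rm a}:=M+m_{0}I$ both symmetric positive definite (they are positive semidefinite Laplacians shifted by a positive multiple of $I$). Next establish that every solution of \eqref{eqn:x} is bounded: the energy-like function $V$ of \eqref{eqn:V} is nonnegative and satisfies ${\dot V}=-{\dot x}^{T}B{\dot x}\leq 0$ along solutions, so $V(t)\leq V(0)$, which bounds $x(t)$ since $K_{\rm a},M_{\rm a}>0$. Being produced by an LTI system, $x(t)$ admits the finite-sum representation \eqref{eqn:sum} with distinct $\lambda_{k}$ and polynomial vector coefficients $p_{k}$; boundedness forces ${\rm Re}\,\lambda_{k}\leq 0$ for all $k$ and forces $p_{k}$ to be constant whenever ${\rm Re}\,\lambda_{k}=0$. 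If synchronization fails, then at least one of the purely imaginary modes $\lambda_{k}=j\omega_{k}$ must have constant shape $\xi:=p_{k}\notin{\rm span}\,\{\one_{q}\}$: were the shape of every such mode in ${\rm span}\,\{\one_{q}\}$, then every solution would asymptotically lie in ${\rm span}_{\Real}\{\one_{q}\}$ and all components $x_{i}$ would agree in the limit, contradicting non-synchronization. Moreover $\omega_{k}\neq 0$, since the constant mode $t\mapsto\xi$ would otherwise solve \eqref{eqn:x}, giving $K_{\rm a}\xi=0$ and hence $\xi=0$ by positive definiteness of $K_{\rm a}$; replacing $(\omega_{k},\xi)$ by $(-\omega_{k},\bar\xi)$ if necessary, set $\omega:=\omega_{k}>0$. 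Finally, substituting $t\mapsto e^{j\omega t}\xi$ into \eqref{eqn:x} yields \eqref{eqn:cherry}, and left-multiplying by $\xi^{*}$ yields \eqref{eqn:plum}; since $K_{\rm a},M_{\rm a},B$ are symmetric the three quadratic-form terms in \eqref{eqn:plum} are real, so its imaginary part gives $\omega\xi^{*}B\xi=0$, whence $B\xi=0$ (as $B\geq 0$), and then \eqref{eqn:cherry} collapses to $(K_{\rm a}-\omega^{2}M_{\rm a})\xi=0$. Stacking the last two equalities is precisely \eqref{eqn:pine}.

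\emph{Backward direction.} Given $\omega>0$ and $\xi\notin{\rm span}\,\{\one_{q}\}$ satisfying \eqref{eqn:pine}, its two block rows read $(K_{\rm a}-\omega^{2}M_{\rm a})\xi=0$ and $B\xi=0$, so $t\mapsto{\rm Re}\,(e^{j\omega t}\xi)$ is a genuine solution of \eqref{eqn:x}. Picking indices $i,j$ with $\xi_{i}\neq\xi_{j}$ (possible since $\xi\notin{\rm span}\,\{\one_{q}\}$), the difference $x_{i}(t)-x_{j}(t)={\rm Re}\,(e^{j\omega t}(\xi_{i}-\xi_{j}))$ is a nonvanishing sinusoid, so this solution is not synchronous.

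\emph{Main obstacle.} The delicate part is the mode-bookkeeping in the forward direction: arguing that a failure to synchronize manifests as a single undamped mode whose shape lies outside ${\rm span}\,\{\one_{q}\}$, and excluding the frequency $\omega=0$. Both points rely on $K_{\rm a}$ being positive definite rather than merely positive semidefinite, so the shift by $k_{0}I$ is essential. The remaining ingredients (the Lyapunov-type boundedness estimate and extracting the imaginary part of \eqref{eqn:plum}) are routine.
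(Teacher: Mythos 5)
Your proposal is correct and follows essentially the same route as the paper: boundedness via the energy function \eqref{eqn:V}, the modal decomposition \eqref{eqn:sum} forcing any non-synchronous behavior into an undamped constant-shape mode, the quadratic-form argument giving $B\xi=0$ and $(K_{\rm a}-\omega^{2}M_{\rm a})\xi=0$, and reversibility for the converse. The only difference is that you spell out the mode-bookkeeping (why the offending mode has $\omega\neq0$ and shape outside ${\rm span}\,\{\one_{q}\}$) which the paper states more tersely.
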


The above lemma can be a useful test for synchronization, but it is
worthwhile to search for a simpler way to determine when the
oscillators synchronize. We now present the following alternative.

\begin{theorem}\label{thm:test3}
The oscillators~\eqref{eqn:inertial} synchronize if and only if
${\rm Re}\,\lambda_{2}(\Lambda)>0$ where
\begin{eqnarray}\label{eqn:test3}
\Lambda:=(M+m_{0}I)^{-1/2}(B+j(K+k_{0}I))(M+m_{0}I)^{-1/2}-j\frac{k_{0}}{m_{0}}I
\end{eqnarray}
is the complex Laplacian representing the network.
\end{theorem}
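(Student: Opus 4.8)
The plan is to translate condition~2 of Lemma~\ref{lem:one} into a spectral statement about $\Lambda$. Write $M_{\rm a}=M+m_{0}I$ and $K_{\rm a}=K+k_{0}I$ (both positive definite) and set
\[
L:=M_{\rm a}^{-1/2}BM_{\rm a}^{-1/2}+jM_{\rm a}^{-1/2}K_{\rm a}M_{\rm a}^{-1/2}=:H+jS ,
\]
so that $\Lambda=L-j(k_{0}/m_{0})I$, with $H=H^{*}\geq 0$ and $S=S^{*}>0$ (congruence preserves definiteness). I would first record two facts about $\Lambda$. (i) Since $M\one_{q}=B\one_{q}=K\one_{q}=0$, a direct computation gives $\Lambda\one_{q}=0$; thus $0$ is an eigenvalue of $\Lambda$ with eigenvector $\one_{q}$. (ii) Every eigenvalue of $\Lambda$ has nonnegative real part: if $L\zeta=\mu\zeta$ with $\|\zeta\|=1$ then ${\rm Re}\,\mu=\zeta^{*}H\zeta\geq 0$, and subtracting $j(k_{0}/m_{0})I$ leaves real parts unchanged. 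Consequently ${\rm Re}\,\lambda_{1}(\Lambda)=0$, so ${\rm Re}\,\lambda_{2}(\Lambda)>0$ fails exactly when $\Lambda$ carries at least two eigenvalues (counted with algebraic multiplicity) on the imaginary axis.

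The technical heart is a third fact: every purely imaginary eigenvalue of $\Lambda$ (equivalently of $L$) is semisimple. I would argue by contradiction using a putative length-two Jordan chain $L\zeta_{1}=j\mu\zeta_{1}$, $L\zeta_{2}=j\mu\zeta_{2}+\zeta_{1}$ with $\mu\in\Real$. Taking real parts in $\zeta_{1}^{*}L\zeta_{1}=j\mu\|\zeta_{1}\|^{2}$ gives $\zeta_{1}^{*}H\zeta_{1}=0$, hence $H\zeta_{1}=0$ (as $H\geq 0$) and $S\zeta_{1}=\mu\zeta_{1}$; pairing the second chain relation with $\zeta_{1}$ and using $\zeta_{1}^{*}H=0$ together with $\zeta_{1}^{*}S=\mu\zeta_{1}^{*}$ then forces $\|\zeta_{1}\|^{2}=0$, a contradiction. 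With semisimplicity in hand, ``${\rm Re}\,\lambda_{2}(\Lambda)\leq 0$'' becomes equivalent to the existence of an eigenvector $\eta\notin{\rm span}\,\{\one_{q}\}$ of $\Lambda$ with a purely imaginary eigenvalue: if $0$ is the only imaginary eigenvalue its eigenspace is then at least two-dimensional and contains a vector independent of $\one_{q}$, while any second distinct imaginary eigenvalue contributes an eigenvector that is automatically independent of $\one_{q}$.

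It remains to match such an $\eta$ with the data $(\omega,\xi)$ of Lemma~\ref{lem:one} through the invertible substitution $\xi=M_{\rm a}^{-1/2}\eta$, which both preserves and reflects membership in ${\rm span}\,\{\one_{q}\}$ since $M_{\rm a}^{-1/2}\one_{q}=m_{0}^{-1/2}\one_{q}$. Given $\eta$ with $\Lambda\eta=j\nu\eta$, the real-part argument again yields $H\eta=0$ (i.e.\ $B\xi=0$) and $S\eta=(\nu+k_{0}/m_{0})\eta$; positive definiteness of $S$ makes $\nu+k_{0}/m_{0}$ a positive number, so $\omega:=\sqrt{\nu+k_{0}/m_{0}}>0$ is well defined, and unwinding $M_{\rm a}^{-1/2}K_{\rm a}M_{\rm a}^{-1/2}\eta=\omega^{2}\eta$ gives $(K_{\rm a}-\omega^{2}M_{\rm a})\xi=0$, so \eqref{eqn:pine} holds. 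Conversely, from $\omega>0$ and $\xi\notin{\rm span}\,\{\one_{q}\}$ satisfying \eqref{eqn:pine}, the vector $\eta:=M_{\rm a}^{1/2}\xi$ obeys $H\eta=0$ and $S\eta=\omega^{2}\eta$, hence $\Lambda\eta=j(\omega^{2}-k_{0}/m_{0})\eta$ with $\eta\notin{\rm span}\,\{\one_{q}\}$. Chaining the equivalences, ${\rm Re}\,\lambda_{2}(\Lambda)\leq 0$ holds iff such an $\eta$ exists iff such $(\omega,\xi)$ exist iff, by Lemma~\ref{lem:one}, the oscillators fail to synchronize; the contrapositive is the theorem. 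I expect the semisimplicity claim to be the main obstacle, as it is precisely what upgrades the bare eigenvalue count to the existence of a genuine (non-generalized) eigenvector off ${\rm span}\,\{\one_{q}\}$, and thereby closes the loop with condition~2 of Lemma~\ref{lem:one}.
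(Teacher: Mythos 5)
Your proof is correct and, in its skeleton, coincides with the paper's: both directions are reduced to condition~2 of Lemma~\ref{lem:one} through the congruence $\xi=M_{\rm a}^{-1/2}\eta$, the quadratic-form argument on $H=M_{\rm a}^{-1/2}BM_{\rm a}^{-1/2}$ and $S=M_{\rm a}^{-1/2}K_{\rm a}M_{\rm a}^{-1/2}$ (the paper's $D$ and $R$) extracts $B\xi=0$ and, via positive definiteness of $S$, the relation $(K_{\rm a}-\omega^{2}M_{\rm a})\xi=0$ with $\omega=\sqrt{\nu+k_{0}/m_{0}}>0$, and the converse retraces these steps, exactly as in the paper. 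The one step where you genuinely deviate is the degenerate case in which the second imaginary-axis eigenvalue is a repeated zero: the paper produces an eigenvector outside ${\rm span}\,\{\one_{q}\}$ by exploiting the complex symmetry $\Lambda^{T}=\Lambda$ (a generalized eigenvector $\eta_{\rm g}$ with $\Lambda\eta_{\rm g}=\one_{q}$ would force $q=\one_{q}^{T}\Lambda\eta_{\rm g}=(\Lambda\one_{q})^{T}\eta_{\rm g}=0$), whereas you prove outright that every purely imaginary eigenvalue of $\Lambda$ is semisimple via a length-two Jordan chain and the Cartesian decomposition $L=H+jS$ with $H\geq 0$. Your semisimplicity argument is valid (the chain relations do force $\|\zeta_{1}\|^{2}=0$) and is marginally more general, since it uses only positive semidefiniteness of the Hermitian part rather than symmetry of $\Lambda$, at the cost of proving more than is needed; the paper's symmetry trick is shorter and targets only the eigenvalue at the origin, which is the sole problematic case. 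Everything else --- $\Lambda\one_{q}=0$, ${\rm Re}\,\lambda_{i}(\Lambda)\geq 0$, the identification of ``${\rm Re}\,\lambda_{2}(\Lambda)\leq 0$'' with two imaginary-axis eigenvalues, and the matching with \eqref{eqn:pine} --- mirrors the paper's argument.
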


Note that when there is no inertial coupling (i.e., $M=0$) we have
$\Lambda=m_{0}^{-1}(B+jK)$ and the condition for synchronization
presented in Theorem~\ref{thm:test3} can be written as ${\rm
Re}\,\lambda_{2}(m_{0}^{-1}(B+jK))>0$ which clearly is equivalent to
${\rm Re}\,\lambda_{2}(B+jK)>0$. Furthermore, if there is only
dissipative coupling (i.e., both $M=0$ and $K=0$) the condition
further reduces to ${\rm Re}\,\lambda_{2}(m_{0}^{-1}B)>0$ which is
equivalent to $\lambda_{2}(B)>0$ since $B$ is real and symmetric.
Therefore Theorem~\ref{thm:test3} generalizes the Tests~1 and 2
mentioned earlier in the paper. Note however that this
generalization has one qualitative aspect which its corollaries do
not manifest: it appears to depend not only on the coupling
$(M,\,B,\,K)$ but also on the individual oscillator parameters
$(m_{0},\,k_{0})$. Is this a superficial dependence? If not, there
should exist a coupling $(M,\,B,\,K)$ for which one can find two
pairs $(m_{0}',\,k_{0}')$ and $(m_{0}'',\,k_{0}'')$ such that the
array of oscillators~\eqref{eqn:x} described by
$(M,\,B,\,K,\,m_{0}',\,k_{0}')$ synchronize whereas the other set of
parameters $(M,\,B,\,K,\,m_{0}'',\,k_{0}'')$ produces asynchronous
solutions. It turns out that such couplings are not difficult to
come by. (Hence the answer to our question is {\em no}.) We provide
an example below.

\begin{figure}[h]
\begin{center}
\includegraphics[scale=0.45]{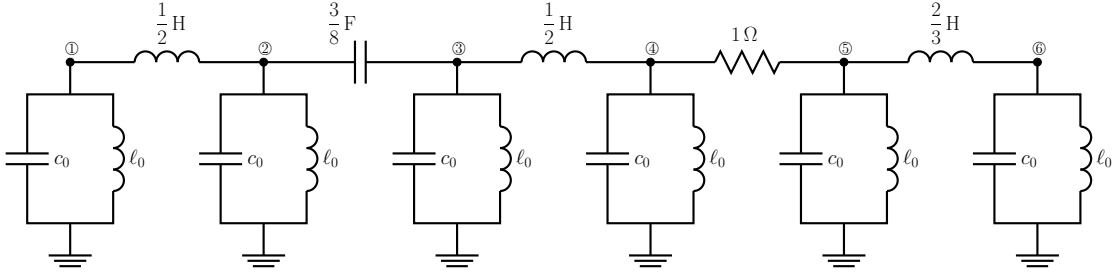}
\caption{A network of LC-tanks coupled via LTI capacitors,
inductors, and resistors. The oscillators synchronize for
$(c_{0},\,\ell_{0})=(2,\,\frac{1}{2})$ but not for
$(c_{0},\,\ell_{0})=(1,\,1)$.}\label{fig:tanks6}
\end{center}
\end{figure}

Consider the network of six coupled LC-tanks shown in
Fig.~\ref{fig:tanks6}. Letting $x=[x_{1}\ x_{2}\ \cdots\ x_{6}]^{T}$
denote the node voltage vector and setting $m_{0}=c_{0}$ and
$k_{0}=\ell_{0}^{-1}$ the dynamics of this network obey
\eqref{eqn:x} by the following coupling matrices
\begin{eqnarray*}
M = \left[\arraycolsep=3.3pt\def\arraystretch{1.2}
\begin{array}{rrrrrr}
0&0&0&0&0&0\\
0&\frac{3}{8}&-\frac{3}{8}&0&0&0\\
0&-\frac{3}{8}&\frac{3}{8}&0&0&0\\
0&0&0&0&0&0\\
0&0&0&0&0&0\\
0&0&0&0&0&0
\end{array}\right],\quad
B = \left[ \arraycolsep=3.3pt\def\arraystretch{1.2}
\begin{array}{rrrrrr}
0&0&0&0&0&0\\
0&0&0&0&0&0\\
0&0&0&0&0&0\\
0&0&0&1&-1&0\\
0&0&0&-1&1&0\\
0&0&0&0&0&0
\end{array}\right],\quad
K = \left[ \arraycolsep=3.3pt\def\arraystretch{1.2}
\begin{array}{rrrrrr}
2&-2&0&0&0&0\\
-2&2&0&0&0&0\\
0&0&2&-2&0&0\\
0&0&-2&2&0&0\\
0&0&0&0&\frac{3}{2}&-\frac{3}{2}\\
0&0&0&0&-\frac{3}{2}&\frac{3}{2}
\end{array}\right].
\end{eqnarray*}
Let us first study this circuit under the oscillator parameters
$c_{0}=2$F and $\ell_{0}=\frac{1}{2}$H, yielding
$(m_{0},\,k_{0})=(2,\,2)$ and $\omega_{0}=1$rad/sec (where
$\omega_{0}=\sqrt{k_{0}/m_{0}}$ is the frequency of uncoupled
oscillations). The eigenvalues of the associated
Laplacian~\eqref{eqn:test3} can be computed as
$\lambda_{1}=0,\,\lambda_{2}=0.0078-j0.1409,\,\lambda_{3}=0.0088+j1.5747
,\,\lambda_{4}=0.0434+j1.9338
,\,\lambda_{5}=0.4452+j0.1386,\,\lambda_{6}=0.4947+j1.4484$. Since
the second eigenvalue is on the open right half-plane the
oscillators synchronize by Theorem~\ref{thm:test3}. Consider once
again the array in Fig.~\ref{fig:tanks6}, this time with $c_{0}=1$F
and $\ell_{0}=1$H, yielding $(m_{0},\,k_{0})=(1,\,1)$ and
$\omega_{0}=1$rad/sec. Note that in this second case even though we
changed the oscillator parameters, the frequency of uncoupled
oscillations is still the same. Despite this sameness however the
eigenvalues of the Laplacian
($\lambda_{1}=0,\,\lambda_{2}=j3,\,\lambda_{3}=0.0107-j0.2436,\,\lambda_{4}=0.0996
+j3.8647,\,\lambda_{5}=0.8666+j0.2996,\,\lambda_{6}=1.0230+j2.7936$)
tell us that the oscillators will fail to synchronize because the
condition ${\rm Re}\,\lambda_{2}(\Lambda)>0$ no longer holds. We end
this section by the proof of Theorem~\ref{thm:test3}.

\vspace{0.12in}

\noindent{\bf Proof of Theorem~\ref{thm:test3}.} Recall the
shorthand notation $M_{\rm a}=M+m_{0}I$, $K_{\rm a}=K+k_{0}I$, and
$\omega_{0}^{2}=k_{0}/m_{0}$ we introduced earlier. We first
establish some properties of the Laplacian~\eqref{eqn:test3}. Since
the real matrices $M,\,K$ are symmetric positive semidefinite, the
augmented matrices $M_{\rm a},\,K_{\rm a}$ are symmetric positive
definite. Furthermore, for an eigenvalue $\alpha\in\Real$ and the
corresponding eigenvector $y\in\Complex^{q}$ satisfying $My=\alpha
y$ it is clear that we can write $M_{\rm
a}^{\sigma}y=(\alpha+m_{0})^{\sigma}y$ for any power
$\sigma\in\Real$. Likewise, $Ky=\alpha y$ implies $K_{\rm
a}^{\sigma}y=(\alpha+k_{0})^{\sigma}y$. This at once yields
$\Lambda\one_{q}=0$ since $M\one_{q}=B\one_{q}=K\one_{q}=0$. Let
$D:=M_{\rm a}^{-1/2}BM_{\rm a}^{-1/2}$ and $R:=M_{\rm
a}^{-1/2}K_{\rm a}M_{\rm a}^{-1/2}$. Note that $D,\,R\geq 0$. We now
show that $\Lambda$ can have no eigenvalue on the open left
half-plane. Let $\lambda\in\Complex$ be an eigenvalue of $\Lambda$
and $u\in\Complex^{q}$ be the corresponding unit eigenvector. That
is, $\Lambda u=\lambda u$ and $u^{*}u=1$. We can write
\begin{eqnarray*}
\lambda = u^{*}\Lambda
u=u^{*}(D+jR-j\omega_{0}^{2}I)u=u^{*}Du+j(u^{*}Ru-\omega_{0}^{2})
\end{eqnarray*}
whence follows that ${\rm Re}\,\lambda=u^{*}Du\geq 0$. Combining
this with the fact that $\Lambda$ has an eigenvalue at the origin
(recall $\Lambda\one_{q}=0$) allows us to write, without loss of
generality, $\lambda_{1}(\Lambda)=0.$

Suppose now ${\rm Re}\,\lambda_{2}(\Lambda)\leq0$. Since ${\rm
Re}\,\lambda_{i}(\Lambda)\geq 0$ for all $i$ we have to have
$\lambda_{2}(\Lambda)=j\mu$ for some $\mu\in\Real$. This implies the
existence of an eigenvector $\eta\notin{\rm span}\,\{\one_{q}\}$
satisfying $\Lambda\eta=j\mu\eta$. This is obvious if $\mu\neq 0$.
It is still true when $\mu=0$, i.e., when the eigenvalue at the
origin is repeated. To see that suppose otherwise, i.e., $\one_{q}$
were the sole eigenvector corresponding to the eigenvalue at the
origin. Since the eigenvalue at the origin is repeated we then would
have to have a generalized eigenvector $\eta_{\rm g}$ satisfying
$\Lambda\eta_{\rm g}=\one_{q}$. This however would produce the
contradiction
\begin{eqnarray*}
q=\one_{q}^{T}\one_{q}=\one_{q}^{T}\Lambda\eta_{\rm
g}=(\Lambda\one_{q})^{T}\eta_{\rm g}=0
\end{eqnarray*}
due to the symmetry $\Lambda^{T}=\Lambda$. Now, without loss of
generality let $\eta^{*}\eta=1$. We can write
\begin{eqnarray*}
j\mu=\eta^{*}\Lambda\eta=\eta^{*}D\eta+j(\eta^{*}R\eta-\omega_{0}^{2})
\end{eqnarray*}
which tells us $\eta^{*}D\eta=0$. Consequently, since the real
matrix $D$ is symmetric positive semidefinite, we have $D\eta=0$.
Recalling $D=M_{\rm a}^{-1/2}BM_{\rm a}^{-1/2}$ and defining
$\xi:=M_{\rm a}^{-1/2}\eta$ we can then assert
\begin{eqnarray}\label{eqn:Bxi}
B\xi=0
\end{eqnarray}
because $M_{\rm a}^{-1/2}$ is nonsingular. Observe that
$\xi\notin{\rm span}\,\{\one_{q}\}$. This follows from the fact that
$\one_{q}$ is an eigenvector of $M$ and, consequently, of $M_{\rm
a}^{1/2}$. That is, $M_{\rm a}^{1/2}\one_{q}\in{\rm
span}\,\{\one_{q}\}$. Hence, if $\xi$ did belong to ${\rm
span}\,\{\one_{q}\}$ then we would have $\eta=M_{\rm
a}^{1/2}\xi\in{\rm span}\,\{\one_{q}\}$. But this contradicts
$\eta\notin{\rm span}\,\{\one_{q}\}$. Combining $D\eta=0$ and
$\Lambda\eta=j\mu\eta$ we obtain $R\eta-\omega_{0}^{2}\eta=\mu\eta$
yielding $R\eta=(\omega_{0}^{2}+\mu)\eta$. Since $R$ is symmetric
positive definite all its eigenvalues are real and positive. This
means $\omega=\sqrt{\omega_{0}^{2}+\mu}>0$
satisfies
\begin{eqnarray*} 0=(R-\omega^{2}I)\eta=(M_{\rm
a}^{-1/2}K_{\rm a}M_{\rm a}^{-1/2}-\omega^{2}I)\eta=M_{\rm
a}^{-1/2}(K_{\rm a}-\omega^{2}M_{\rm a})M_{\rm a}^{-1/2}\eta
\end{eqnarray*}
which lets us see
\begin{eqnarray}\label{eqn:Kaw2Ma}
((K+k_{0}I)-\omega^{2}(M+m_{0}I))\xi=0\,.
\end{eqnarray}
Combining \eqref{eqn:Bxi}, \eqref{eqn:Kaw2Ma}, and
Lemma~\ref{lem:one} we finally establish that the
oscillators~\eqref{eqn:inertial} do not synchronize.

We now show the other direction. Suppose the
oscillators~\eqref{eqn:inertial} do not synchronize. Then by
Lemma~\ref{lem:one} there exist $\omega>0$ and $\xi\notin{\rm
span}\,\{\one_{q}\}$ satisfying \eqref{eqn:Bxi} and
\eqref{eqn:Kaw2Ma}. Let $\mu=\omega^{2}-\omega_{0}^{2}$ and
$\eta=M_{\rm a}^{1/2}\xi$. By retracing the steps we have taken in
the first part of the proof we can easily reach
$\Lambda\eta=j\mu\eta$ as well as establishing $\eta\notin{\rm
span}\,\{\one_{q}\}$. This means (because $\Lambda\one_{q}=0$) that
the Laplacian $\Lambda$ has at least two eigenvalues on the
imaginary axis. Hence we conclude that ${\rm
Re}\,\lambda_{2}(\Lambda)=0$ since all the eigenvalues of $\Lambda$
are on the closed right half-plane.\hfill\null\hfill$\blacksquare$

\section{Simpler characterizations under structural
conditions}\label{sec:simpler}

In the previous section we have seen that for a given coupling
$(M,\,B,\,K)$ whether the oscillators~\eqref{eqn:inertial}
synchronize or not depends in general on the individual oscillator
parameters $(m_{0},\,k_{0})$ as well. In this section we investigate
structural conditions on the coupling under which synchronization
depends solely on the triple $(M,\,B,\,K)$. To this end, we need
some notation first. A graph $\G$ is a pair $(\V,\,\E)$ where
$\V=\{v_{1},\,v_{2},\,\ldots,\,v_{q}\}$ is the set of vertices
(nodes) and the set $\E$ contains some (unordered) pairs
$(v_{i},\,v_{j})$ with $i\neq j$, called edges. The set of vertices
incident to an edge is denoted by ${\rm ver}\,\E\subset\V$. That is,
${\rm ver}\,\E=\{v_{i}:(v_{i},\,v_{j})\in\E\}$. We define the edge
set describing the inertial coupling as $\E_{\rm
m}=\{(v_{i},\,v_{j}):m_{ij}>0\}$. The sets $\E_{\rm b}$ and $\E_{\rm
k}$ are defined, mutatis mutandis, for the dissipative and
restorative couplings, respectively. Therefore the three graphs
(introduced earlier) describing the coupling in the
network~\eqref{eqn:inertial} can be written as $\M=(\V,\,\E_{\rm
m})$, $\B=(\V,\,\E_{\rm b})$, and $\K=(\V,\,\E_{\rm k})$.

\begin{theorem}\label{thm:B}
The oscillators~\eqref{eqn:inertial} synchronize if the graph $\B$
is connected.
\end{theorem}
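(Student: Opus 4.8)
The plan is to argue by contradiction through Lemma~\ref{lem:one}, which has already reduced synchronization to a purely algebraic condition. Suppose the oscillators~\eqref{eqn:inertial} fail to synchronize even though $\B$ is connected. Then Lemma~\ref{lem:one} furnishes a frequency $\omega\in\Real_{>0}$ and a vector $\xi\in\Complex^{q}\setminus{\rm span}\,\{\one_{q}\}$ satisfying \eqref{eqn:pine}. Reading off the lower block of \eqref{eqn:pine}, this in particular gives $B\xi=0$, so the whole burden falls on understanding the null space of $B$.

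The key observation is that connectivity of $\B$ pins that null space down completely: by the standard fact that the Laplacian of a connected graph (see the graph-theory references cited earlier) has a one-dimensional kernel, we have $\ker B={\rm span}\,\{\one_{q}\}$ over $\Real$, and hence over $\Complex$ as well, since $B$ is real and symmetric so that its complex kernel is merely the complexification of its real kernel. Therefore $B\xi=0$ forces $\xi\in{\rm span}\,\{\one_{q}\}$, contradicting the defining property $\xi\notin{\rm span}\,\{\one_{q}\}$ of the vector produced by Lemma~\ref{lem:one}. Since assuming non-synchronization has led to a contradiction, the oscillators must synchronize, which proves the theorem; note that the argument uses nothing about $M$, $K$, $m_{0}$, or $k_{0}$, so for a connected $\B$ synchronization indeed depends only on the coupling triple (in fact only on $\B$), consistent with the aim of this section.

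I do not expect any real technical obstacle here: essentially the only content is the one-dimensionality of a connected graph's Laplacian null space, and Lemma~\ref{lem:one} has done the analytical work of isolating the ``bad'' vector $\xi$ annihilated by $B$. One could equally route the proof through Theorem~\ref{thm:test3}, observing from its proof that ${\rm Re}\,\lambda_{2}(\Lambda)\le 0$ would yield $D\eta=0$ for some $\eta\notin{\rm span}\,\{\one_{q}\}$, hence $B(M_{\rm a}^{-1/2}\eta)=0$ with $M_{\rm a}^{-1/2}\eta\notin{\rm span}\,\{\one_{q}\}$, producing the same contradiction in different clothing; the only point worth a word of care is the elementary passage between the real and complex kernels of $B$.
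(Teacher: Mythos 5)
Your proof is correct and is essentially the paper's own argument: connectivity of $\B$ gives ${\rm null}\,B={\rm span}\,\{\one_{q}\}$, and then the lower block of \eqref{eqn:pine} in Lemma~\ref{lem:one} yields the same contradiction. The extra remark on passing from the real to the complex kernel is a harmless elaboration, not a different route.
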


\begin{proof}
That $\B$ is connected means its Laplacian $B$ satisfies ${\rm
null}\,B={\rm span}\,\{\one_{q}\}$; see, for instance,
\cite[Lem.~3.1]{ren08}. The result then follows by
Lemma~\ref{lem:one}.
\end{proof}

\vspace{0.12in}

For many applications, connectedness of the dissipative coupling
graph could be too conservative an assumption. We now attempt to
relax this requirement utilizing {\em isolation}, by which we mean
the following. Two graphs (defined over the same vertex set) are
isolated when their edges do not touch one another. More formally:

\begin{definition}
The graphs $\M=(\V,\,\E_{\rm m})$ and $\K=(\V,\,\E_{\rm k})$ are
said to be {\em edge-isolated} if ${\rm ver}\,\E_{\rm m}\cap{\rm
ver}\,\E_{\rm k}=\emptyset$.
\end{definition}

\begin{theorem}\label{thm:MK}
Suppose the graphs $\M$ and $\K$ are edge-isolated. Then the
oscillators~\eqref{eqn:inertial} synchronize if and only if ${\rm
Re}\,\lambda_{2}(B+j(K-M))>0$.
\end{theorem}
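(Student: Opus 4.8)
\vspace{0.12in}
\noindent{\bf Proof sketch.} The plan is to mirror the proof of Theorem~\ref{thm:test3}, pairing the spectral structure of $B+j(K-M)$ with Lemma~\ref{lem:one}, but now invoking edge-isolation to decouple the roles of $M$ and $K$. First I would record the structural consequence of the hypothesis: setting $\W_{\rm m}:={\rm ver}\,\E_{\rm m}$, $\W_{\rm k}:={\rm ver}\,\E_{\rm k}$, and $\W_{0}:=\V\setminus(\W_{\rm m}\cup\W_{\rm k})$, edge-isolation gives $\W_{\rm m}\cap\W_{\rm k}=\emptyset$, so these three sets partition $\V$. Since $M$ is a graph Laplacian, all of its nonzero entries -- off-diagonal $-m_{ij}$ and diagonal $\sum_{j}m_{ij}$ alike -- sit in the block indexed by $\W_{\rm m}$; hence for every $v\in\Complex^{q}$ the vector $Mv$ is supported on $\W_{\rm m}$ and depends only on the $\W_{\rm m}$-entries of $v$, and symmetrically for $K$ and $\W_{\rm k}$. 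In particular $M$ and $K$ act on disjoint coordinate blocks. As in the proof of Theorem~\ref{thm:test3}, $B+j(K-M)$ is complex-symmetric, kills $\one_{q}$, and every eigenvalue equals $u^{*}Bu+j\,u^{*}(K-M)u$ with $u^{*}Bu\geq 0$, so all eigenvalues lie in the closed right half-plane and we may take $\lambda_{1}(B+j(K-M))=0$.

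For the {\em only if} part I would assume ${\rm Re}\,\lambda_{2}(B+j(K-M))\leq 0$, so that $\lambda_{2}=j\nu$ for some $\nu\in\Real$. Handling a possibly repeated eigenvalue at the origin exactly by the generalized-eigenvector-and-symmetry argument used for $\Lambda$, there is an eigenvector $\eta\notin{\rm span}\,\{\one_{q}\}$ with $(B+j(K-M))\eta=j\nu\eta$; normalizing and multiplying by $\eta^{*}$ forces $\eta^{*}B\eta=0$, hence $B\eta=0$ (as $B\geq 0$), and therefore $(K-M)\eta=\nu\eta$. Decomposing $\eta$ along $\V=\W_{\rm m}\cup\W_{\rm k}\cup\W_{0}$ and reading off the three blocks of $(K-M)\eta=\nu\eta$, the positive semidefiniteness of $M$ and $K$ yields a trichotomy: if $\nu=0$ then $M\eta=K\eta=0$; if $\nu>0$ then $\eta$ is supported on $\W_{\rm k}$ with $K\eta=\nu\eta$ and $M\eta=0$; if $\nu<0$ then $\eta$ is supported on $\W_{\rm m}$ with $M\eta=-\nu\eta$ and $K\eta=0$. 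In each case one solves $(K+k_{0}I)\eta=\omega^{2}(M+m_{0}I)\eta$ with a genuine $\omega>0$ -- namely $\omega^{2}$ equal to $k_{0}/m_{0}$, $(\nu+k_{0})/m_{0}$, or $k_{0}/(m_{0}-\nu)$ respectively, each positive -- so $(\omega,\eta)$ satisfies \eqref{eqn:pine} and Lemma~\ref{lem:one} gives non-synchronization. The {\em if} part runs this backwards: given $(\omega,\xi)$ from Lemma~\ref{lem:one}, the same block decomposition of $(K+k_{0}I)\xi=\omega^{2}(M+m_{0}I)\xi$ (now using $\omega\neq 0$ together with $M,K\geq 0$) forces $(K-M)\xi=\nu\xi$ for some real $\nu$; thus $j\nu$ is an eigenvalue of $B+j(K-M)$ with eigenvector $\xi\notin{\rm span}\,\{\one_{q}\}$, which places a second eigenvalue on the imaginary axis and gives ${\rm Re}\,\lambda_{2}(B+j(K-M))=0$.

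The one substantive point -- and the only place edge-isolation is used -- is the trichotomy. Without $\W_{\rm m}\cap\W_{\rm k}=\emptyset$ the $M$- and $K$-blocks overlap, the scalar extracted on the $\W_{\rm m}$-block (which $M\geq 0$ wants $\geq 0$) and the one on the $\W_{\rm k}$-block (which $K\geq 0$ wants $\geq 0$) are no longer forced to carry opposite signs, so the eigenvector cannot be confined to a single block; this is precisely the mechanism that lets the $\omega^{2}$-weighting on $M$ in \eqref{eqn:pine} be traded for the unit weighting appearing in $K-M$. Everything else -- the complex-symmetry bookkeeping, the exclusion of a generalized eigenvector at the origin, and the arithmetic of choosing $\omega$ -- is routine and essentially identical to the proof of Theorem~\ref{thm:test3}.
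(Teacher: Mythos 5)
Your proof is correct, and its overall skeleton coincides with the paper's: reduction to Lemma~\ref{lem:one}, the spectral facts $\Gamma\one_{q}=0$ and ${\rm Re}\,\lambda_{i}(\Gamma)\geq 0$ for $\Gamma=B+j(K-M)$, extraction of an eigenvector $\eta\notin{\rm span}\,\{\one_{q}\}$ with $B\eta=0$ and $(K-M)\eta=\nu\eta$ (including the generalized-eigenvector argument for a repeated zero eigenvalue), the sign trichotomy on $\nu$, and the same three choices of $\omega$. Where you differ is the mechanism behind the trichotomy. The paper first converts edge-isolation into the algebraic identity $MK=0$ (via the block form of the two Laplacians) and then invokes a standalone lemma (Lemma~\ref{lem:PQ}): for symmetric positive semidefinite $P,\,Q$ with $PQ=0$, any eigenvector of $P-Q$ is an eigenvector of $P$ alone or of $Q$ alone, according to the sign of the eigenvalue. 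You instead read the eigenvalue equation block-by-block on the partition of $\V$ into ${\rm ver}\,\E_{\rm m}$, ${\rm ver}\,\E_{\rm k}$, and the remaining vertices, using that $M$ and $K$ are supported on disjoint blocks; this is equivalent in substance, and I checked that your block reading of $(K+k_{0}I)\xi=\omega^{2}(M+m_{0}I)\xi$ in the converse direction does yield $(K-M)\xi=\nu\xi$ with the same $\nu$ as in the paper's \eqref{eqn:concern2}. Your version is more concrete and absorbs the $\omega^{2}$-weighting for free (the weight does not change the support), while the paper's lemma is slightly more portable, since it needs only $PQ=0$ and so applies verbatim with $\omega^{2}M$ in place of $M$ and again in the corollary; neither route has a gap.
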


An immediate implication of Theorem~\ref{thm:MK} concerning the type
of electrical networks we considered earlier in the paper is the
following. If the coupling network is such that there is not a
single node where the terminals of a capacitive connector and an
inductive connector meet then whether the oscillators synchronize or
not does not depend on the individual oscillator parameters
$(c_{0},\,\ell_{0})$. Note also that Test~2 follows from
Theorem~\ref{thm:MK} as a special case. In addition,
Theorem~\ref{thm:MK} produces the following sister test.

\begin{corollary}
The coupled oscillators
\begin{eqnarray*}
m_{0}{\ddot x}_{i}+k_{0}x_{i}+\sum_{j=1}^{q}m_{ij}({\ddot
x}_{i}-{\ddot x}_{j})+\sum_{j=1}^{q}b_{ij}({\dot x}_{i}-{\dot
x}_{j})=0
\end{eqnarray*}
synchronize if and only if ${\rm Re}\,\lambda_{2}(B-jM)>0$.
\end{corollary}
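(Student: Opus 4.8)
The plan is to recognize the stated system as the special case of~\eqref{eqn:inertial} in which the restorative coupling is switched off, i.e. $k_{ij}=0$ for all $(i,j)$, and then invoke Theorem~\ref{thm:MK}. Concretely, I would first write $K={\rm lap}\,(0)_{i,j=1}^{q}=0$, so the associated restorative graph $\K=(\V,\,\E_{\rm k})$ has empty edge set, $\E_{\rm k}=\emptyset$. Hence ${\rm ver}\,\E_{\rm k}=\emptyset$, and the intersection condition ${\rm ver}\,\E_{\rm m}\cap{\rm ver}\,\E_{\rm k}=\emptyset$ holds vacuously, whatever the inertial graph $\M$ looks like. In other words, $\M$ and $\K$ are edge-isolated in the sense of the Definition preceding Theorem~\ref{thm:MK}.

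Having checked the hypothesis, I would simply apply Theorem~\ref{thm:MK}: the oscillators synchronize if and only if ${\rm Re}\,\lambda_{2}(B+j(K-M))>0$. Substituting $K=0$ gives $B+j(K-M)=B-jM$, and the claimed criterion ${\rm Re}\,\lambda_{2}(B-jM)>0$ follows at once. There is essentially no obstacle here — the only thing that needs a word is the (trivial) observation that an empty edge set makes the edge-isolation condition automatic, so that Theorem~\ref{thm:MK} is applicable with no structural assumption on $\M$ or $\B$. I would keep the proof to two or three sentences.
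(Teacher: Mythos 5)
Your proposal is correct and is exactly the argument the paper intends: the corollary is the special case $K=0$ of Theorem~\ref{thm:MK}, where edge-isolation holds vacuously because $\E_{\rm k}=\emptyset$ (a case the theorem's proof explicitly covers, since an empty edge set gives a zero Laplacian), and substituting $K=0$ yields the criterion ${\rm Re}\,\lambda_{2}(B-jM)>0$. Nothing is missing.
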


We need the following result for the proof of the theorem.

\begin{lemma}\label{lem:PQ}
Let $P,\,Q\in\Real^{q\times q}$ be symmetric positive semidefinite
matrices satisfying $PQ=0$. Let $\mu\in\Real$ and the nonzero vector
$\eta\in\Complex^{q}$ satisfy
\begin{eqnarray}\label{eqn:PQ}
(P-Q)\eta=\mu\eta.
\end{eqnarray}
The following hold.
\begin{enumerate}
\item If $\mu>0$ then $P\eta=\mu\eta$ and $Q\eta=0$.
\item If $\mu<0$ then $P\eta=0$ and $Q\eta=-\mu\eta$.
\item If $\mu=0$ then $P\eta=0$ and $Q\eta=0$.
\end{enumerate}
\end{lemma}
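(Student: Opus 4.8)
The plan is to reduce \eqref{eqn:PQ} to a pair of eigenvalue relations for $P$ and $Q$ separately, and then let positive semidefiniteness decide which terms can survive. First I would record the elementary but crucial consequence of the hypotheses: since $P$ and $Q$ are symmetric, $QP=(PQ)^{T}=0$ as well, so the two matrices commute and annihilate each other in both orders. Multiplying \eqref{eqn:PQ} on the left by $P$ and using $PQ=0$ then gives $P^{2}\eta=\mu P\eta$, i.e. $P(P\eta)=\mu(P\eta)$; multiplying instead on the left by $Q$ and using $QP=0$ gives $-Q^{2}\eta=\mu Q\eta$, i.e. $Q(Q\eta)=-\mu(Q\eta)$. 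Together with the identity $P\eta-Q\eta=\mu\eta$ read straight off \eqref{eqn:PQ}, these are the three relations to exploit.

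Next I would split into the three sign cases, using throughout the standard fact that a real symmetric positive semidefinite matrix $S$ has no negative eigenvalue: if $Sv=\nu v$ with $v\neq 0$ then $\nu=v^{*}Sv/v^{*}v\geq 0$, so any putative eigenvector for a negative eigenvalue must vanish. If $\mu>0$, the relation $Q(Q\eta)=-\mu(Q\eta)$ would make $Q\eta$ an eigenvector of the positive semidefinite matrix $Q$ for the negative eigenvalue $-\mu$ unless $Q\eta=0$; hence $Q\eta=0$, and then $P\eta-Q\eta=\mu\eta$ collapses to $P\eta=\mu\eta$. The case $\mu<0$ is the mirror image: now $P(P\eta)=\mu(P\eta)$ forces $P\eta=0$, and $P\eta-Q\eta=\mu\eta$ then yields $Q\eta=-\mu\eta$.

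The case $\mu=0$ is the only place that needs a touch more care, and it is where I expect the (mild) obstacle to lie: the relations above only give $P(P\eta)=0$, $Q(Q\eta)=0$, and $P\eta=Q\eta$, none of which is yet the claimed $P\eta=Q\eta=0$. The missing ingredient is the identity $\|P\eta\|^{2}=\eta^{*}P^{T}P\eta=\eta^{*}P^{2}\eta=0$ (using $P^{T}=P$ and $P^{2}\eta=0$), which forces $P\eta=0$ and hence $Q\eta=P\eta=0$. An alternative route that handles all three cases uniformly is to diagonalize: commuting real symmetric matrices share an orthonormal eigenbasis $v_{1},\dots,v_{q}$ with $Pv_{i}=p_{i}v_{i}$, $Qv_{i}=q_{i}v_{i}$, $p_{i},q_{i}\geq 0$, and $p_{i}q_{i}=0$; expanding $\eta=\sum_{i}c_{i}v_{i}$ turns \eqref{eqn:PQ} into $p_{i}-q_{i}=\mu$ for every index $i$ with $c_{i}\neq 0$, and the sign of $\mu$ together with $p_{i}q_{i}=0$ then pins down $(p_{i},q_{i})$ in each case. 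Either way there is no genuine difficulty — the content is entirely in the bookkeeping around positive semidefiniteness — so I would likely present the first argument for its directness and keep the diagonalization remark as a sanity check.
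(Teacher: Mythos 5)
Your proposal is correct and follows essentially the same route as the paper: use symmetry to get $QP=0$, multiply \eqref{eqn:PQ} by $P$ or $Q$ to make $P\eta$ (resp.\ $Q\eta$) a putative eigenvector for a negative eigenvalue, and finish the $\mu=0$ case via $\|P\eta\|^{2}=\eta^{*}P^{2}\eta=0$. The only cosmetic difference is that the paper disposes of the case $\mu<0$ by negating the equation and invoking the case $\mu>0$, whereas you argue it directly; this is immaterial.
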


\begin{proof}
{\em Case~1: $\mu>0$.} Note that $PQ=0$ implies $QP=0$ because the
matrices $P,\,Q$ are symmetric. Multiplying both sides of
\eqref{eqn:PQ} by $-Q$ we obtain $-\mu
Q\eta=-QP\eta+Q^{2}\eta=Q(Q\eta)$ which tells us that the vector
$Q\eta$ if nonzero must be an eigenvector of $Q$ with the negative
eigenvalue $-\mu$. But since $Q\geq 0$ all its eigenvalues must be
nonnegative. Hence $Q\eta=0$. Then \eqref{eqn:PQ} gives us
$P\eta=\mu\eta$. {\em Case~2: $\mu<0$.} Negating \eqref{eqn:PQ} we
can write $(Q-P)\eta=(-\mu)\eta$. The result then follows from the
previous case. {\em Case~3: $\mu=0$.} This time \eqref{eqn:PQ}
implies $P\eta=Q\eta$. Multiplying both sides with $P$ yields
$P^{2}\eta=PQ\eta=0$ thanks to $PQ=0$. Then we can proceed as
follows $0=\eta^{*}P^{2}\eta=\|P\eta\|^{2}$ because $P$ is symmetric
positive semidefinite. And $\|P\eta\|=0$ means $P\eta=0$. Then
$Q\eta=0$ follows by $Q\eta=P\eta$.
\end{proof}

\vspace{0.12in}

\noindent{\bf Proof of Theorem~\ref{thm:MK}.} Let $\M$ and $\K$ be
edge-isolated. This implies that the product of their Laplacians
vanish, i.e., $MK=0$. This is obvious if either ${\rm ver}\,\E_{\rm
m}$ or ${\rm ver}\,\E_{\rm k}$ is empty because an empty edge set
means a zero Laplacian matrix. As for the case that both edge sets
are nonempty we can always label the vertices such that ${\rm
ver}\,\E_{\rm m}=\{v_{1},\,v_{2},\,\ldots,\,v_{r}\}$ and ${\rm
ver}\,\E_{\rm k}=\{v_{s},\,v_{s+1},\,\ldots,\,v_{q}\}$ for some
indices $2\leq r<s\leq q-1$. The corresponding ($q$-by-$q$)
Laplacians then enjoy the block diagonal form
\begin{eqnarray*}
M=\left[\begin{array}{cc}M_{1}&0\\0&0\end{array}\right]\quad
\mbox{and}\quad
K=\left[\begin{array}{cc}0&0\\0&K_{2}\end{array}\right]
\end{eqnarray*}
with $M_{1}\in\Real^{r\times r}$ and
$K_{2}\in\Real^{(q-s+1)\times(q-s+1)}$ which makes it clear that
$MK=KM=0$. Let us introduce the shorthand notation
$\Gamma=B+j(K-M)$. The matrix $\Gamma$ comes with the properties
$\Gamma\one_{q}=0$ and ${\rm Re}\,\lambda_{i}(\Gamma)\geq 0$ for all
$i$. (The demonstration of these properties is very similar to the
demonstration of the same properties satisfied by the matrix
$\Lambda$; see the proof of Theorem~\ref{thm:test3}.) Hence, without
loss of generality, we let $\lambda_{1}(\Gamma)=0$.

Suppose $\lambda_{2}(\Gamma)\leq 0$. This means
$\lambda_{2}(\Gamma)=j\mu$ for some $\mu\in\Real$ because ${\rm
Re}\,\lambda_{i}(\Gamma)\geq 0$ for all $i$. Then we can find an
eigenvector $\xi\notin{\rm span}\,\{\one_{q}\}$ satisfying
$\Gamma\xi=j\mu\xi$ (see the proof of Theorem~\ref{thm:test3}).
Without loss of generality let $\xi$ be a unit vector. By writing
\begin{eqnarray*}
j\mu=\xi^{*}\Gamma\xi=\xi^{*}B\xi+j(\xi^{*}K\xi-\xi^{*}M\xi)
\end{eqnarray*}
we see at once that $\xi^{*}B\xi=0$ because $M,\,B,\,K$ are
symmetric positive semidefinite matrices. Then follows
\begin{eqnarray}\label{eqn:Bxi2}
B\xi=0
\end{eqnarray}
under which $\Gamma\xi=j\mu\xi$ reduces to
\begin{eqnarray}\label{eqn:concern}
(K-M)\xi=\mu\xi\,.
\end{eqnarray}
Let us now study \eqref{eqn:concern} under all three possibilities.
{\em Case~1: $\mu>0$.} By Lemma~\ref{lem:PQ} we have $K\xi=\mu\xi$
and $M\xi=0$. Choosing $\omega=\sqrt{(\mu+k_{0})/m_{0}}$ we can
therefore write
\begin{eqnarray}\label{eqn:Kaw2Ma2}
((K+k_{0}I)-\omega^{2}(M+m_{0}I))\xi=0\,.
\end{eqnarray}
{\em Case~2: $\mu<0$.} By Lemma~\ref{lem:PQ} we have $K\xi=0$ and
$M\xi=-\mu\xi$. This time choosing
$\omega=\sqrt{k_{0}/(-\mu+m_{0})}$ we can establish
\eqref{eqn:Kaw2Ma2}. {\em Case~3: $\mu=0$.} By Lemma~\ref{lem:PQ} we
have $K\xi=M\xi=0$ and \eqref{eqn:Kaw2Ma2} holds with
$\omega=\sqrt{k_{0}/m_{0}}$. Hence for all cases \eqref{eqn:Bxi2}
and \eqref{eqn:Kaw2Ma2} simultaneously hold. Lemma~\ref{lem:one}
then tells us that the oscillators~\eqref{eqn:inertial} do not
synchronize.

To show the other direction suppose the
oscillators~\eqref{eqn:inertial} do not synchronize. By
Lemma~\ref{lem:one} there exist $\omega>0$ and $\xi\notin{\rm
span}\,\{\one_{q}\}$ such that \eqref{eqn:Bxi2} and
\eqref{eqn:Kaw2Ma2} hold. Let us rewrite \eqref{eqn:Kaw2Ma2} as
\begin{eqnarray}\label{eqn:concern2}
(K-\omega^{2}M)\xi=(\omega^{2}m_{0}-k_{0})\xi\,.
\end{eqnarray}
There are three possibilities concerning \eqref{eqn:concern2}. {\em
Case~1: $\omega^{2}m_{0}-k_{0}>0$.} Lemma~\ref{lem:PQ} allows us
write $K\xi=\mu\xi$ and $M\xi=0$ with $\mu=\omega^{2}m_{0}-k_{0}$.
Combining this with \eqref{eqn:Bxi2} we obtain
\begin{eqnarray}\label{eqn:obtain}
\Gamma\xi=j\mu\xi\,.
\end{eqnarray}
{\em Case~2: $\omega^{2}m_{0}-k_{0}<0$.} By Lemma~\ref{lem:PQ} this
time we have $K\xi=0$ and $M\xi=-\mu\xi$ with
$\mu=m_{0}-k_{0}/\omega^{2}$ and again \eqref{eqn:obtain} follows.
{\em Case~3: $\omega^{2}m_{0}-k_{0}=0$.} In this final case we have
to have $K\xi=0$ and $M\xi=0$ according to Lemma~\ref{lem:PQ}. Then
\eqref{eqn:obtain} holds with $\mu=0$. Now, in the light of
$\xi\notin{\rm span}\,\{\one_{q}\}$ and $\Gamma\one_{q}=0$ we can
deduce from \eqref{eqn:obtain} that $\Gamma$ has at least two
eigenvalues on the imaginary axis. Combining this with the fact that
all the eigenvalues of $\Gamma$ are on the closed right half-plane
we reach the conclusion ${\rm Re}\,\lambda_{2}(\Gamma)=0$. Hence the
result.\hfill\null\hfill$\blacksquare$

\section{Conclusion}

In this paper we studied the collective behavior of harmonic
oscillators that are communicating through inertial, dissipative,
and restorative connectors. The coupling considered was fixed and
symmetric. We showed that whether the oscillators tend to
synchronize or not can be determined through the spectrum of a
single complex Laplacian matrix, which is constructed from the three
individual Laplacians, each representing a different type of
coupling. We also provided certain structural conditions (on the
coupling graphs) which render the Laplacian construction much
simpler. The theorems presented here generalize some earlier
results.

\bibliographystyle{plain}
\bibliography{references}
\end{document}